\newcommand{\Rmnum}[1]{\expandafter\@slowromancap\romannumeral #1@}
 \newtheorem{lem}{Lemma}[section]  \newtheorem{thm}{Theorem}[section]
\newtheorem{cor}{Corollary}[section] \newtheorem{defn}{Definition}[section]  
\numberwithin{equation}{section}
 \newcommand{\me}{\mathrm{e}} 
\newcommand{\dif}{\mathrm{d}} \DeclareMathAlphabet{\mathsfsl}{OT1}{cmss}{m}{sl} \DeclareMathAlphabet{\mathpzc}{OT1}{pzc}{m}{it}
    \newcommand{\ee}{\mathbb{E}}
\newcommand{\nn}{\mathbb{N}}
\newcommand{\rr}{\mathbb{R}}
\newcommand{\vv}{\mathbb{V}}
 \def\CC{\mathcal C}   \def\FF{\mathcal F}  \def\HH{\mathcal H}
\def\d"{^{\prime\prime}} \def\d'{^{\prime}}
\begin{document}
\begin{CJK*}{GBK}{kai}
\title[]{Convergence rate in the law of logarithm for negatively dependent random variables under sub-linear expectations}\thanks{Project supported by Science and Technology Research Project of Jiangxi Provincial Department of Education of China (No. GJJ2201041), Doctoral Scientific Research Starting Foundation of Jingdezhen Ceramic University ( No.102/01003002031) , Academic Achievement Re-cultivation Project	 of Jingdezhen Ceramic University (Grant No. 215/20506277).}
\date{} \maketitle

%\subjclass[2000]{60F15, 60F05}

%\keywords{Complete moment convergence; Capacity; I. i. d. random variables; Weighted sums; Sublinear expectation}
%\date{} \maketitle

\begin{center}
 XU Mingzhou~\footnote{Email: mingzhouxu@whu.edu.cn}\quad  WANG Wei\footnote{Email: 2093754218@qq.com}
 \\
 School of Information Engineering, Jingdezhen Ceramic University\\
  Jingdezhen 333403, China
\end{center}

 \renewcommand{\abstractname}{~}

{\bf Abstract:}
Let $\{X,X_n,n\ge 1\}$ be a sequence of identically distributed, negatively dependent (NA) random variables under sub-linear expectations, and denote $S_n=\sum_{i=1}^{n}X_i$, $n\ge 1$. Assume that $h(\cdot)$ is a positive non-decreasing function on $(0,\infty)$ fulfulling $\int_{1}^{\infty}(th(t))^{-1}\dif t=\infty$. Write $Lt=\ln \max\{\me,t\}$, $\psi(t)=\int_{1}^{t}(sh(s))^{-1}\dif s$, $t\ge 1$. In this sequel, we establish that $\sum_{n=1}^{\infty}(nh(n))^{-1}\vv\left\{|S_n|\ge (1+\varepsilon)\sigma\sqrt{2nL\psi(n)}\right\}<\infty$, $\forall \varepsilon>0$ if $\ee(X)=\ee(-X)=0$ and $\ee(X^2)=\sigma^2\in (0,\infty)$. The result generalizes that of NA random variables in probability space.	

{\bf Keywords:}  Negatively dependent random variables; Convergence rate; Law of logarithm; Sub-linear expectations

 {\bf 2020 Mathematics Subject Classifications:} 60F15, 60G50
\vspace{-3mm}

\section{Introduction }
Peng \cite{peng2007g,peng2019nonlinear} introduced  the concepts of the sub-linear expectations space to study the uncertainty in probability.  The works of Peng \cite{peng2007g,peng2019nonlinear} stimulated many scholars to investigate the results under sub-linear expectations space, which extend the relevant ones in probability space. Zhang \cite{zhang2016exponential, zhang2016rosenthal} got exponential inequalities and Rosenthal's inequality under sub-linear expectations. Under sub-linear expectations, Xu and Cheng \cite{xu2022small} obtained how small the increments of $G$-Brownian motion are, Xu and Kong \cite{xu2024completeqth} studied complete complete $q$th moment convergence of moving average processes for $m$-widely acceptable random variables. For more limit theorems under sub-linear expectations, the interested readers could refer to, Zhang \cite{zhang2015donsker}, Xu and Zhang \cite{xu2019three, xu2020law}, Wu and Jiang \cite{wu2018strong}, Zhong and Wu \cite{zhong2017complete}, Chen \cite{chen2016strong},  Zhang \cite{zhang2022strong}, Gao and Xu \cite{gao2011large}, Kuczmaszewska \cite{kuczmaszewska2020complete}, Xu and Cheng \cite{xu2021convergence, xu2022note,xu2022noteb}, Xu et al. \cite{xu2022complete}, \cite{xu2023convergence},  Xu and Kong \cite{xu2023noteon}, Chen and Wu \cite{chen2022complete}, Xu \cite{xu2023acomplete, xu2023completeconvergencea,  xu2024onthe, xu2024sample} and the references therein.
	
	In probability space, Qiu and Zhao \cite{qiu2023convergence} studied convergence rate in the law of logarithm for negatively dependent random variables. For references on complete rate in linear expectation space, the interested reader could refer to Li \cite{li2009convergencerate}, Zhang and Wu \cite{zhang2016moment}, and the refercences therein. Motivated by the works of  Qiu and Zhao \cite{qiu2023convergence}, we try to investigate convergence rate in the law of logarithm for negatively dependent random variables under sub-linear expectations, which extends the relevant ones in Qiu and Zhao \cite{qiu2023convergence}.
	
	The rest of this paper is constructed as follows. We present necessary basic notions, concepts, relevant  properties, and lemmas under sub-linear expectations in the next section. In Section 3, we give our main result, Theorem \ref{thm01},  the proof of which is presented in Section 4.

	\section{Preliminary}
	\setcounter{equation}{0}
	\setcounter{equation}{0}
	As in Xu and Cheng \cite{xu2021convergence}, we adopt similar notations as in the work by Peng\cite{peng2019nonlinear}, Zhang \cite{zhang2016exponential}. Assume that $(\Omega,\FF)$ is a given measurable space. Suppose that $\HH$ is a subset of all random variables on $(\Omega,\FF)$ such that $X_1,\cdots,X_n\in \HH$ yields $\varphi(X_1,\cdots,X_n)\in \HH$ for each $\varphi\in \CC_{l,Lip}(\rr^n)$, where $\CC_{l,Lip}(\rr^n)$ means the linear space of (local lipschitz) function $\varphi$ fulfilling
	$$
	|\varphi(\mathbf{x})-\varphi(\mathbf{y})|\le C(1+|\mathbf{x}|^m+|\mathbf{y}|^m)(|\mathbf{x}-\mathbf{y}|), \forall \mathbf{x},\mathbf{y}\in \rr^n
	$$
	for some $C>0$, $m\in \nn$ relying on $\varphi$.
	\begin{defn}\label{defn1} A sub-linear expectation $\ee$ on $\HH$ is a functional $\ee:\HH\mapsto \bar{\rr}:=[-\infty,\infty]$ fulfilling the following properties: for all $X,Y\in \HH$, we have
		\begin{description}
			\item[\rm (a)] If $X\ge Y$, then $\ee[X]\ge \ee[Y]$;
			\item[\rm (b)] $\ee[c]=c$, $\forall c\in\rr$;
			\item[\rm (c)] $\ee[\lambda X]=\lambda\ee[X]$, $\forall \lambda\ge 0$;
			\item[\rm (d)] $\ee[X+Y]\le \ee[X]+\ee[Y]$ whenever $\ee[X]+\ee[Y]$ is not of the form $\infty-\infty$ or $-\infty+\infty$.
		\end{description}
		
	\end{defn}
	A set function $V:\FF\mapsto[0,1]$ is called to be a capacity if
	\begin{description}
		\item[\rm (a)]$V(\emptyset)=0$, $V(\Omega)=1$;
		\item[\rm (b)]$V(A)\le V(B)$, $A\subset B$, $A,B\in \FF$.\\
		%Moreover, if $V$ is continuous, then $V$ should obey
		%\item[\rm (c)] $V(A_n)\uparrow V(A)$, if $A_n\uparrow A$.
		%\item[\rm (d)] $V(A_n)\downarrow V(A)$, if $A_n\downarrow A$.
	\end{description}
	A capacity $V$ is said to be sub-additive if $V(A\bigcup B)\le V(A)+V(B)$, $A,B\in \FF$.

	In this article, given a sub-linear expectation space $(\Omega, \HH, \ee)$, define $\vv(A):=\inf\{\ee[\xi]:I_A\le \xi, \xi\in \HH\}$, $\forall A\in \FF$ (see (2.3) and the definitions of $\vv$ above (2.3) in Zhang \cite{zhang2016exponential}). $\vv$ is a sub-additive capacity. Write
	$$
	C_{\vv}(X):=\int_{0}^{\infty}\vv(X>x)\dif x +\int_{-\infty}^{0}(\vv(X>x)-1)\dif x.
	$$
	%As in 4.3 of Zhang \cite{zhang2016exponential}, throughout this paper, denote an extension of $\ee$ on the space of all random variables by
	%	$$
	%\ee^{*}(X)=\inf\left\{\ee[Y]:X\le Y,Y\in\HH\right\}.
	%$$
	%Then $\ee^{*}$ is a sub-linear expectation on the space of all random variables, $\ee[X]=\ee^{*}[X]$, $\forall X\in \HH$, and $\vv(A)=\ee^{*}(I_A)$, $\forall A\in \FF$.
	
	Assume that $\mathbf{X}=(X_1,\cdots, X_m)$, $X_i\in\HH$ and $\mathbf{Y}=(Y_1,\cdots,Y_n)$, $Y_i\in \HH$  are two random vectors on  $(\Omega, \HH, \ee)$. $\mathbf{Y}$ is named to be negatively dependent to $\mathbf{X}$, if for each $\psi_1\in \CC_{l,Lip}(\rr^m)$, $\psi_2\in \CC_{l,Lip}(\rr^n)$, we have $\ee[\psi_1(\mathbf{X})\psi_2(\mathbf{Y})]\le\ee[\psi_1(\mathbf{X})] \ee[\psi_2(\mathbf{Y})]$ whenever $\psi_1(\mathbf{X})\ge 0$, $\ee[\psi_2(\mathbf{Y})]\ge 0 $, $\ee[|\psi_1(\mathbf{X})\psi_2(\mathbf{Y})|]<\infty$, $\ee[|\psi_1(\mathbf{X})|]<\infty$, $\ee[|\psi_2(\mathbf{Y})|]<\infty$, and either $\psi_1$ and $\psi_2$ are coordinatewise nondecreasing or $\psi_1$ and $\psi_2$ are coordinatewise nonincreasing (see Definition 2.3 of Zhang \cite{zhang2016exponential}, Definition 1.5 of Zhang \cite{zhang2016rosenthal}).
	$\{X_n\}_{n=1}^{\infty}$ is called to be a sequence of negatively dependent random variables, if $X_{n+1}$ is negatively dependent to $(X_1,\cdots,X_n)$ for each $n\ge 1$.
	
	Suppose that $\mathbf{X}_1$ and $\mathbf{X}_2$ are two $n$-dimensional random vectors defined, respectively, in sub-linear expectation spaces $(\Omega_1,\HH_1,\ee_1)$ and $(\Omega_2,\HH_2,\ee_2)$. They are said to be identically distributed if  for every $\psi\in \CC_{l,Lip}(\rr^n)$ such that $\psi(\mathbf{X}_1)\in \HH_1, \psi(\mathbf{X}_2)\in \HH_2$,
	$$
	\ee_1[\psi(\mathbf{X}_1)]=\ee_2[\psi(\mathbf{X}_2)], \mbox{  }
	$$
	whenever the sub-linear expectations are finite. $\{X_n\}_{n=1}^{\infty}$ is called to be identically distributed if for each $i\ge 1$, $X_i$ and $X_1$ are identically distributed.
	
	We cite two lemmas below.
	\begin{lem}\label{lem01}(cf. Theorem 3.1 (a) of Zhang \cite{zhang2016exponential})
		Suppose that $\{X_n,n\ge 1\}$ is a sequence of NA random variables with $\ee(X_n)\le 0$ in sub-linear expectation space $(\Omega, \HH, \ee)$. Write $B_n=\sum_{i=1}^{n}\ee(X_i^2)$. Then for any $x>0$, $\alpha>0$, we have
		\begin{eqnarray*}
			\vv\left\{S_n\ge x\right\}\le \vv\left\{\max_{1\le j\le n}X_j> \alpha\right\}+\exp\left\{-\frac{x^2}{2(\alpha x+B_n)}\left[1+\frac{2}{3}\ln\left(1+\frac{\alpha x}{B_n}\right)\right]\right\}.
		\end{eqnarray*}
	\end{lem}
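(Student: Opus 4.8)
The plan is to prove this as a Bernstein-type bound via exponential-moment truncation, adapting the classical argument to the sub-linear setting, where one has only sub-additivity of $\ee$ and of the capacity $\vv$, together with the negative-dependence factorization, rather than genuine independence. First I would truncate from above at level $\alpha$: set $\bar S_n:=\sum_{j=1}^n\bar X_j$ with $\bar X_j:=X_j\wedge\alpha$. On the event $\{\max_{1\le j\le n}X_j\le\alpha\}$ one has $X_j=\bar X_j$ for every $j$, hence $S_n=\bar S_n$; therefore $\{S_n\ge x\}\subseteq\{\max_j X_j>\alpha\}\cup\{\bar S_n\ge x\}$, and sub-additivity of $\vv$ gives
$$\vv\{S_n\ge x\}\le \vv\Big\{\max_{1\le j\le n}X_j>\alpha\Big\}+\vv\{\bar S_n\ge x\}.$$
It then remains to bound the second term by the stated exponential.

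For that I would apply the exponential Chebyshev inequality in capacity form. For $t>0$ each $e^{t\bar X_j}$ is a bounded (since $\bar X_j\le\alpha$), globally Lipschitz, nonnegative, nondecreasing function of $X_j$, so it lies in $\HH$ and finite products lie in $\HH$ as well. Since $I_{\{\bar S_n\ge x\}}\le e^{-tx}e^{t\bar S_n}$ and $\vv(A)=\inf\{\ee[\xi]:I_A\le\xi\}$, I obtain $\vv\{\bar S_n\ge x\}\le e^{-tx}\ee[e^{t\bar S_n}]$ using property (c) of $\ee$. Applying the negative-dependence property iteratively to the nonnegative nondecreasing factors $e^{t\bar X_1},\dots,e^{t\bar X_n}$ yields $\ee[e^{t\bar S_n}]\le\prod_{j=1}^n\ee[e^{t\bar X_j}]$. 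To control each factor I would use the elementary inequality $e^u\le 1+u+\frac{e^{b}-1-b}{b^2}u^2$ for $u\le b:=t\alpha$ (a consequence of the monotonicity of $u\mapsto(e^u-1-u)/u^2$), combined with sub-additivity of $\ee$, the estimates $\ee[\bar X_j]\le\ee[X_j]\le 0$ and $\ee[\bar X_j^2]\le\ee[X_j^2]$ (valid since $\bar X_j\le X_j$ and $\bar X_j^2\le X_j^2$), and $1+y\le e^y$. This gives $\ee[e^{t\bar X_j}]\le\exp\big(\frac{e^{t\alpha}-1-t\alpha}{\alpha^2}\ee[X_j^2]\big)$, whence, taking the product and reinserting the prefactor,
$$\vv\{\bar S_n\ge x\}\le\exp\Big(-tx+\frac{e^{t\alpha}-1-t\alpha}{\alpha^2}B_n\Big),\qquad t>0.$$

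Next I would optimize in $t$. Setting the derivative of the exponent to zero gives the minimizer $t=\alpha^{-1}\ln(1+\alpha x/B_n)$; substituting it and writing $\lambda:=\alpha x/B_n$ produces the sharp Bennett form
$$\vv\{\bar S_n\ge x\}\le\exp\Big(-\frac{B_n}{\alpha^2}\big[(1+\lambda)\ln(1+\lambda)-\lambda\big]\Big).$$
Since $\alpha x+B_n=B_n(1+\lambda)$ and $x^2=\lambda^2B_n^2/\alpha^2$, the claimed bound follows at once provided one verifies the purely analytic inequality
$$(1+\lambda)\ln(1+\lambda)-\lambda\ \ge\ \frac{\lambda^2}{2(1+\lambda)}\Big[1+\frac{2}{3}\ln(1+\lambda)\Big],\qquad \lambda\ge 0,$$
which, after clearing denominators, reduces to the rational lower bound $\ln(1+\lambda)\ge(9\lambda^2+6\lambda)/(4\lambda^2+12\lambda+6)$.

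I expect this last inequality to be the main obstacle. It is tight — both sides agree through order $\lambda^2$ at $\lambda=0$ — so no crude comparison suffices; I would instead verify equality at $\lambda=0$ and show that the difference has nonnegative derivative (equivalently, that the associated power series has nonnegative coefficients). The other delicate point is the bookkeeping specific to the sub-linear framework: ensuring that each truncated exponential genuinely belongs to $\HH$, and that every additive split invokes only sub-additivity of $\ee$, never additivity. This is precisely where the generalization of the classical NA exponential inequality resides, and it must be carried out with care to keep all the intermediate expectations finite and the negative-dependence hypotheses applicable.
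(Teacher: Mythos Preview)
Your argument is essentially correct and is precisely the standard Bennett-type proof of this exponential inequality: truncate above at $\alpha$, apply exponential Markov to $\bar S_n$, factor via the NA property for the nondecreasing nonnegative functions $e^{t\bar X_j}$, bound each factor using $e^u\le 1+u+\frac{e^{t\alpha}-1-t\alpha}{(t\alpha)^2}u^2$ for $u\le t\alpha$ together with sub-additivity, monotonicity, and $\ee[\bar X_j]\le 0$, optimize in $t$ to get Bennett's bound, and finally reduce to the one-variable inequality $\ln(1+\lambda)\ge (9\lambda^2+6\lambda)/(4\lambda^2+12\lambda+6)$. Your handling of the sub-linear technicalities (membership in $\HH$, finite expectations, invoking only sub-additivity) is fine, and the closing analytic inequality is indeed tight to third order at $\lambda=0$ but can be verified by showing the difference and its first three derivatives vanish at $0$ while the fourth is nonnegative, or by a direct derivative comparison.

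The paper, however, does not prove this lemma at all: it is simply quoted from Zhang \cite{zhang2016exponential}, Theorem 3.1(a), with no argument supplied. So there is no ``paper's own proof'' to compare against here; you have effectively reproduced Zhang's proof, and your approach is the one Zhang uses in that reference. What you gain by writing it out is self-containment and an explicit check that the classical Bennett machinery survives under sub-linear expectations and negative dependence; what the paper gains by citing is brevity, since the inequality is an established tool in this literature.
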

	\begin{lem}\label{lem02}
		Assume that that $\{X_n,n\ge 1\}$ is a sequence of NA random variables with $\ee(X_n)\le 0$ in sub-linear expectation space $(\Omega, \HH, \ee)$. Then for $1\le \nu\le 2$, we have
		\begin{eqnarray*}
			\ee\left\{\left(\left(\sum_{i=1}^{n}X_i\right)^{+}\right)^{\nu}\right\}\le 4\sum_{k=1}^{n}\ee|X_k|^{\nu}.
		\end{eqnarray*}
	\end{lem}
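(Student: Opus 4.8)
The plan is to argue by induction on $n$. If $\ee|X_k|^\nu=\infty$ for some $k$ the right‑hand side is $\infty$ and there is nothing to prove, so assume $\ee|X_k|^\nu<\infty$ for every $k$; the standard Jensen inequalities for sub‑linear expectations then also give $\ee|X_k|<\infty$, so each $\ee X_k$ is a finite number. When $\nu=1$ the claim is immediate, since $(\sum_{i=1}^n X_i)^+\le\sum_{i=1}^n|X_i|$ and $\ee$ is sub‑additive, so I assume $1<\nu\le2$ henceforth and write $S_m=\sum_{i=1}^m X_i$, $T_m=S_m^+$.

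The pointwise input is the elementary inequality
\[
((a+b)^+)^\nu\le (a^+)^\nu+\nu\,(a^+)^{\nu-1}b+3\,|b|^\nu,\qquad a,b\in\rr,
\]
valid for $1<\nu\le2$, which one checks by a short case analysis on the signs of $a$ and of $a+b$, using the convexity of $t\mapsto t^\nu$ on $[0,\infty)$ when $a,a+b\ge0$. Applying it with $a=S_{n-1}$, $b=X_n$ and then taking $\ee$ (sub‑additivity) yields
\[
\ee[T_n^\nu]\le\ee[T_{n-1}^\nu]+\nu\,\ee[T_{n-1}^{\nu-1}X_n]+3\,\ee|X_n|^\nu,
\]
so the whole matter reduces to showing the middle term is non‑positive. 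Split $X_n=(X_n-\ee X_n)+\ee X_n$, so that $\ee[T_{n-1}^{\nu-1}X_n]\le\ee[T_{n-1}^{\nu-1}(X_n-\ee X_n)]+\ee[T_{n-1}^{\nu-1}\,\ee X_n]$; the last term is $\le0$ because $T_{n-1}^{\nu-1}\ge0$ and the constant $\ee X_n\le0$. In the first term, $T_{n-1}^{\nu-1}=((X_1+\dots+X_{n-1})^+)^{\nu-1}$ is a non‑negative, coordinatewise non‑decreasing function of $(X_1,\dots,X_{n-1})$, while $X_n-\ee X_n$ is non‑decreasing in $X_n$ with $\ee[X_n-\ee X_n]=0$; since $\{X_i\}$ is NA, $X_n$ is negatively dependent on $(X_1,\dots,X_{n-1})$, hence $\ee[T_{n-1}^{\nu-1}(X_n-\ee X_n)]\le\ee[T_{n-1}^{\nu-1}]\cdot0=0$ (the expectations involved being finite by the moment assumptions). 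Thus $\ee[T_n^\nu]\le\ee[T_{n-1}^\nu]+3\,\ee|X_n|^\nu$; together with $\ee[T_1^\nu]\le\ee|X_1|^\nu$ this closes the induction with $\ee[T_n^\nu]\le3\sum_{k=1}^n\ee|X_k|^\nu\le4\sum_{k=1}^n\ee|X_k|^\nu$.

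The step requiring the most care is technical: for $1<\nu<2$ the maps $t\mapsto(t^+)^{\nu-1}$ and $t\mapsto(t^+)^\nu$ are not locally Lipschitz at $0$, so a priori $T_{n-1}^{\nu-1},T_n^\nu\notin\HH$ and the negative‑dependence inequality cannot be quoted verbatim. I would handle this by mollification: take smooth convex $\varphi_\delta\in\CC_{l,Lip}(\rr)$ with $\varphi_\delta'\in\CC_{l,Lip}(\rr)$, $\varphi_\delta'\ge0$ and $\varphi_\delta\downarrow(\,\cdot^+)^\nu$ as $\delta\downarrow0$ (for instance composing $t\mapsto t^\nu$ with $x\mapsto(\sqrt{x^2+\delta^2}+x)/2$), carry out the entire induction with $\varphi_\delta$ in place of $(\,\cdot^+)^\nu$ — the elementary inequality and the negative‑dependence step both persist because $\varphi_\delta$ is convex with non‑negative non‑decreasing derivative — and finally let $\delta\downarrow0$, the passage to the limit being legitimate thanks to the $\nu$‑th moment finiteness secured at the outset. (No preliminary reduction to the centred case $\ee X_i=0$ is needed, the argument above already covering $\ee X_i\le0$.)
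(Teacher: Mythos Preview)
Your approach is genuinely different from the paper's. The paper's entire proof is the single sentence ``By Theorem 2.1 (a) of Zhang \cite{zhang2016rosenthal}, we immediately see that the result here holds'' --- a bare citation of Zhang's Rosenthal-type inequality under sub-linear expectations. Your argument is instead self-contained: an induction on $n$ via the pointwise bound $((a+b)^+)^\nu\le(a^+)^\nu+\nu(a^+)^{\nu-1}b+3|b|^\nu$, with the negative-dependence inequality (together with $\ee X_n\le0$) killing the cross term. This is essentially how such moment inequalities are proved from scratch, so you gain transparency at the price of the $\HH$-membership issue you flag.

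Two small corrections on that technical step. First, for $1<\nu\le2$ the map $t\mapsto(t^+)^\nu$ \emph{is} $C^1$ and hence lies in $\CC_{l,Lip}$; only $t\mapsto(t^+)^{\nu-1}$ fails, so the obstruction is solely in applying the NA inequality to the cross term. Second, your justification that the mollified elementary inequality ``persists because $\varphi_\delta$ is convex with non-negative non-decreasing derivative'' is not the right reason: convexity yields a \emph{lower} bound on $\varphi_\delta(a+b)-\varphi_\delta(a)-\varphi_\delta'(a)b$, not an upper one. What actually makes the inequality persist with a constant independent of $\delta$ is that $\varphi_\delta'$ is $(\nu-1)$-H\"older uniformly in $\delta$; with your choice $\varphi_\delta=g_\delta^\nu$, $g_\delta(t)=(\sqrt{t^2+\delta^2}+t)/2$, the scaling $\varphi_\delta(t)=\delta^\nu\varphi_1(t/\delta)$ gives $|\varphi_\delta'(x)-\varphi_\delta'(y)|=\delta^{\nu-1}|\varphi_1'(x/\delta)-\varphi_1'(y/\delta)|$, and the single smooth function $\varphi_1'$ is globally $(\nu-1)$-H\"older. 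With that in place your mollification goes through and the proof closes --- in fact with constant $3$, slightly sharper than the stated $4$.
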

	\begin{proof}By Theorem 2.1 (a) of Zhang \cite{zhang2016rosenthal}, we immediately see that the result here holds.
	\end{proof}
	
	In the paper we assume that $\ee$ is countably sub-additive, i.e., $\ee(X)\le \sum_{n=1}^{\infty}\ee(X_n)$, whenever $X\le \sum_{n=1}^{\infty}X_n$, $X,X_n\in \HH$, and $X\ge 0$, $X_n\ge 0$, $n=1,2,\ldots$. Let $C$ stands for a positive constant which may differ from place to place. $I(A)$ or $I_A$ represents the indicator function of $A$.

	\section{Main Results}
	The following is our main result.
	\begin{thm}\label{thm01}
		Assume that $\{X,X_n, n\ge 1\}$ is a sequence of identically distributed NA random variables in sub-linear expectation space $(\Omega, \HH, \ee)$, $h(\cdot)$ is a positive non-decreasing function on $(0,\infty)$ fulfilling  $\int_{1}^{\infty}(th(t))^{-1}\dif t=\infty$. Write $\psi(t)=\int_{1}^{t}(sh(s))^{-1}\dif s$, $t\ge 0$. If
		\begin{eqnarray}\label{01}
			&\ee(X)=\ee(-X)=0,  \ee(X^2)=\bar{\sigma}^2\le C_{\vv}\left(X^2\right)<\infty,
		\end{eqnarray}
		then  for any $\varepsilon>0$, we have
		\begin{eqnarray}\label{02}
			\sum_{n=1}^{\infty}\frac{1}{nh(n)}\vv\left\{S_n\ge (1+\varepsilon)\bar{\sigma}\sqrt{2nL\psi(n)}\right\}<\infty,
		\end{eqnarray}
		\begin{eqnarray}\label{02+}
			\sum_{n=1}^{\infty}\frac{1}{nh(n)}\vv\left\{-S_n\ge (1+\varepsilon)\bar{\sigma}\sqrt{2nL\psi(n)}\right\}<\infty,
		\end{eqnarray}
	\end{thm}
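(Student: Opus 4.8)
The plan is to prove \eqref{02} and then obtain \eqref{02+} for free: $\{-X_n\}$ is again identically distributed and NA (negation turns coordinatewise nondecreasing functions into coordinatewise nonincreasing ones, so the NA property in Definition~2.x is preserved), and it satisfies \eqref{01} since $\ee(-X)=\ee(X)=0$ and $C_{\vv}((-X)^2)=C_{\vv}(X^2)$; applying \eqref{02} to $\{-X_n\}$ gives \eqref{02+}. So fix $\varepsilon>0$, write $b_n=\bar\sigma\sqrt{2nL\psi(n)}$, $x_n=(1+\varepsilon)b_n$, and note $\psi(n)\to\infty$, hence $L\psi(n)=\ln\psi(n)\to\infty$. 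Pick small parameters $\theta>0$, $\theta_1\in(0,1)$ (to be fixed later) and truncate at level $\beta_n:=\theta\sqrt{n/L\psi(n)}$, so that $\beta_n\to\infty$ and $\beta_n x_n=\theta(1+\varepsilon)\bar\sigma\sqrt2\,n$. Decompose $X_j=(X_j\wedge\beta_n)+(X_j-\beta_n)^{+}$ for $j\le n$, set $U_n=\sum_{j\le n}(X_j\wedge\beta_n)$, $V_n=\sum_{j\le n}(X_j-\beta_n)^{+}$, and use $\vv\{S_n\ge x_n\}\le\vv\{U_n\ge(1-\theta_1)x_n\}+\vv\{V_n\ge\theta_1x_n\}$; it suffices to show both $\sum_n(nh(n))^{-1}\vv\{U_n\ge(1-\theta_1)x_n\}$ and $\sum_n(nh(n))^{-1}\vv\{V_n\ge\theta_1x_n\}$ are finite. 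Throughout I use $(nh(n))^{-1}=\psi'(n)$ and the fact that, by countable sub-additivity, $\ee(Z)\le C_{\vv}(Z)$ for $Z\ge0$, so in particular $\ee((X-\beta_n)^{+})\le\int_{\beta_n}^{\infty}\vv(|X|>r)\,\dif r\le\beta_n^{-1}\int_{\beta_n}^{\infty}r\vv(|X|>r)\,\dif r\to0$ (the last integral being $\le\tfrac12 C_{\vv}(X^2)<\infty$).

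\textbf{The bounded part $U_n$.} Put $m_n=\ee(X\wedge\beta_n)$. Since $X\wedge\beta_n\le X$ and $X\wedge\beta_n=X-(X-\beta_n)^{+}$ one gets $-\ee((X-\beta_n)^{+})\le m_n\le0$, so $|m_n|\to0$. The variables $\tilde X_j:=X_j\wedge\beta_n-m_n$ are NA (a nondecreasing Lipschitz image of $X_j$), satisfy $\ee\tilde X_j=0$, $\tilde X_j\le\beta_n+|m_n|\le2\beta_n$ eventually, and $\ee(\tilde X_j^{2})\le\ee((X\wedge\beta_n)^2)\le\ee(X^2)=\bar\sigma^{2}$; moreover $U_n\le\sum_{j\le n}\tilde X_j$ because $m_n\le0$. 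Apply Lemma~\ref{lem01} to $\{\tilde X_j\}$ with $x=(1-\theta_1)x_n$, $\alpha=2\beta_n$ (so the $\max$-term is $0$) and $B_n$ replaced by the larger $n\bar\sigma^{2}$ (the bound in Lemma~\ref{lem01} is increasing in $B_n$). Since $\frac{x^2}{2n\bar\sigma^2}=(1-\theta_1)^2(1+\varepsilon)^2L\psi(n)$ and $\lambda_0:=\frac{2\beta_n(1-\theta_1)x_n}{n\bar\sigma^2}=\frac{2\sqrt2\,\theta(1-\theta_1)(1+\varepsilon)}{\bar\sigma}$ is a constant, one obtains
\[
\vv\{U_n\ge(1-\theta_1)x_n\}\le\bigl(\max\{\me,\psi(n)\}\bigr)^{-\rho_0},\qquad \rho_0:=(1-\theta_1)^2(1+\varepsilon)^2\,\frac{1+\tfrac23\ln(1+\lambda_0)}{1+\lambda_0}.
\]
As $\theta,\theta_1\to0$ we have $\lambda_0\to0$ and the fraction tends to $1$, so we may fix $\theta,\theta_1$ so small that $\rho_0>1$. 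Then $\psi'(n)\bigl(\max\{\me,\psi(n)\}\bigr)^{-\rho_0}$ is eventually decreasing and $\sum_n\psi'(n)\bigl(\max\{\me,\psi(n)\}\bigr)^{-\rho_0}\le C+\int^{\infty}u^{-\rho_0}\,\dif u<\infty$.

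\textbf{The tail part $V_n$.} With $\mu_n=\ee((X-\beta_n)^{+})$ one has $n\mu_n=o(x_n)$ by the estimate above, so for large $n$, $\{V_n\ge\theta_1x_n\}\subseteq\{\sum_{j\le n}W_j\ge\tfrac{\theta_1}{2}x_n\}$ with $W_j:=(X_j-\beta_n)^{+}-\mu_n$, which are NA with $\ee W_j=0$ and finite $\nu$-th moment. Fix $\nu\in(1,2)$. By Markov's inequality and Lemma~\ref{lem02},
\[
\vv\Bigl\{\textstyle\sum_{j\le n}W_j\ge\tfrac{\theta_1}{2}x_n\Bigr\}\le\bigl(\tfrac{\theta_1}{2}x_n\bigr)^{-\nu}\,\ee\Bigl[\bigl((\textstyle\sum_{j\le n}W_j)^{+}\bigr)^{\nu}\Bigr]\le 4n\bigl(\tfrac{\theta_1}{2}x_n\bigr)^{-\nu}\ee|W_1|^{\nu},
\]
and $\ee|W_1|^{\nu}\le C\bigl(\ee[((X-\beta_n)^{+})^{\nu}]+\mu_n^{\nu}\bigr)$. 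The $\mu_n^{\nu}$ part contributes, after the normalization $\psi'(n)\cdot4n x_n^{-\nu}$, a term of order $n^{-\nu}$, hence a convergent series. For the main part, countable sub-additivity gives $\ee[((X-\beta_n)^{+})^{\nu}]\le C_{\vv}\bigl(((X-\beta_n)^{+})^{\nu}\bigr)=\nu\int_{\beta_n}^{\infty}(r-\beta_n)^{\nu-1}\vv(X>r)\,\dif r\le\nu\int_{\beta_n}^{\infty}r^{\nu-1}\vv(|X|>r)\,\dif r=:G_n$. Using $x_n^{\nu}=C(nL\psi(n))^{\nu/2}$ and interchanging the (Tonelli) sum and integral,
\[
\sum_n\frac{G_n}{h(n)(nL\psi(n))^{\nu/2}}=\nu\int_0^{\infty}r^{\nu-1}\vv(|X|>r)\,\Sigma(r)\,\dif r,\qquad \Sigma(r):=\sum_{n:\ \beta_n\le r}\frac{1}{h(n)(nL\psi(n))^{\nu/2}}.
\]
Since $\beta_n$ is eventually increasing, $\{n:\beta_n\le r\}=\{1,\dots,N(r)\}$ with $\theta^{2}N(r)\le r^{2}L\psi(N(r))$. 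The hypothesis $\int_1^{\infty}(th(t))^{-1}\dif t=\infty$ forces $h$ to be slowly varying, and so is $L\psi$; hence (Karamata's theorem, or a direct estimate) $\Sigma(r)\le C\,N(r)^{1-\nu/2}h(N(r))^{-1}(L\psi(N(r)))^{-\nu/2}\le C\theta^{-(2-\nu)}h(1)^{-1}r^{2-\nu}(L\psi(N(r)))^{1-\nu}\le C'r^{2-\nu}$ for large $r$, because $1-\nu<0$ and $L\psi(N(r))\ge1$. Therefore
\[
\sum_n\frac{1}{nh(n)}\,\vv\{V_n\ge\theta_1x_n\}\le C\int_0^{\infty}r^{\nu-1}\vv(|X|>r)\,r^{2-\nu}\,\dif r=C\int_0^{\infty}r\,\vv(|X|>r)\,\dif r=\tfrac{C}{2}\,C_{\vv}(X^2)<\infty.
\]
Combining the two parts gives \eqref{02}, and \eqref{02+} follows by symmetry.

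\textbf{Main obstacle.} The delicate point is the tail estimate for $V_n$: the exponential inequality (Lemma~\ref{lem01}) forces the truncation level $\beta_n$ to be of order $\sqrt{n/L\psi(n)}$, and for such $\beta_n$ a crude union bound over the exceedances $\{X_j>\beta_n\}$ already diverges (it leads to $\sum_n h(n)^{-1}\vv(X^2>cn/L\psi(n))$, which need not converge under $C_{\vv}(X^2)<\infty$). One is therefore forced to control the over-shoot through the moment inequality (Lemma~\ref{lem02}) with a fractional exponent $1<\nu<2$, and—this is the essential manoeuvre—to interchange the summation over $n$ with the integration over the level $r$ \emph{before} invoking the moment hypothesis, so that it enters only in the optimal form $\int_0^{\infty}r\vv(|X|>r)\,\dif r=\tfrac12 C_{\vv}(X^2)<\infty$; making this work requires carefully tracking the slowly varying factors $h$ and $L\psi$ in $\Sigma(r)$.
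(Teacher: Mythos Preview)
Your overall architecture is sound and close to the paper's, but you use a \emph{single} truncation at $\beta_n=\theta\sqrt{n/L\psi(n)}$ and then control the whole overshoot $V_n$ by Lemma~\ref{lem02}; the paper instead truncates twice (at $c\delta_n$ and at $\sqrt{2n}$), handling the middle range by Lemma~\ref{lem02} and the far tail $\{X_i>\sqrt{2n}\}$ by a union bound. The advantage of the paper's second cut is that, after using $|X^{(2)}|^{\tau}\le C X^{2\tau}/\delta_n^{\tau}$ on $\{c\delta_n<X\le\sqrt{2n}\}$, the factor $\delta_n^{-\tau}$ combines with $x_n^{-\tau}$ to give exactly $n^{-\tau}$, so that the summand becomes $C/(h(n)n^{\tau})\le C/(h(1)n^{\tau})$ and no regularity of $h$ or $L\psi$ is needed at all. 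Your single truncation forces you instead to analyse $\Sigma(r)=\sum_{\beta_n\le r}(h(n))^{-1}(nL\psi(n))^{-\nu/2}$, which is where your argument slips.

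The specific gap is the sentence ``the hypothesis $\int_1^{\infty}(th(t))^{-1}\,\dif t=\infty$ forces $h$ to be slowly varying.'' This is false: take the non-decreasing step function $h$ equal to $a_{k-1}$ on $[a_{k-1},a_k)$ with $a_0=1$, $a_{k+1}=\me^{a_k}$; then $\int_{a_{k-1}}^{a_k}(th(t))^{-1}\dif t=1-a_{k-2}/a_{k-1}\to1$, so the integral diverges, yet $h(2t)/h(t)$ is unbounded. Consequently your Karamata step $\Sigma(r)\le C\,N^{1-\nu/2}h(N)^{-1}(L\psi(N))^{-\nu/2}$ is not justified as written. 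The repair is easy and does not change your scheme: since $h$ is non-decreasing, first bound $1/h(n)\le 1/h(1)$, so $\Sigma(r)\le h(1)^{-1}\sum_{n\le N}n^{-\nu/2}(L\psi(n))^{-\nu/2}$; now only $L\psi$ appears, and $L\psi$ \emph{is} slowly varying (because $\psi(2t)-\psi(t)\le(\ln 2)/h(t)$ and $\psi(t)\to\infty$, regardless of any regularity of $h$). Karamata then gives $\sum_{n\le N}n^{-\nu/2}(L\psi(n))^{-\nu/2}\le CN^{1-\nu/2}(L\psi(N))^{-\nu/2}$, and with $N\le\theta^{-2}r^{2}L\psi(N)$ you obtain $\Sigma(r)\le Cr^{2-\nu}(L\psi(N))^{1-\nu}\le Cr^{2-\nu}$ since $\nu>1$, exactly as you wanted.
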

	
	By the similar discussion of Li and Rosalsky \cite{li2007assuplement}, by Theorem \ref{thm01}, we could obtain the following.
	\begin{cor}\label{cor1} Suppose that $b\ge 0$, $\{X,X_n, n\ge 1\}$ is a sequence of identically distributed NA random variables in sub-linear expectation space $(\Omega, \HH, \ee)$. If (\ref{01}) holds, then we have
		\begin{eqnarray*}
			&&	\sum_{n=1}^{\infty}\frac{1}{n(LLn)^b}\vv\left(S_n\ge (1+\varepsilon)\bar{\sigma}\sqrt{2nLLn}
			\right)<\infty, \quad \forall \varepsilon>0,\\
			&&\sum_{n=1}^{\infty}\frac{1}{n(LLn)^b}\vv\left(-S_n\ge (1+\varepsilon)\bar{\sigma}\sqrt{2nLLn}
			\right)<\infty, \quad \forall \varepsilon>0.
		\end{eqnarray*}
	\end{cor}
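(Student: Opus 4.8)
The plan is to deduce the corollary from Theorem \ref{thm01} by specializing the weight function. I would take $h(t)=(LLt)^b$ on $(0,\infty)$. Since $b\ge 0$ and $LLt=\ln\max\{\me,Lt\}\ge 1$ is non-decreasing in $t$, this $h$ is positive and non-decreasing, and the divergence condition holds: with the substitution $u=\ln t$ the tail of $\int_{1}^{\infty}(th(t))^{-1}\dif t$ becomes $\int^{\infty}(\ln u)^{-b}\dif u=\infty$ for every $b\ge 0$. Hence Theorem \ref{thm01} applies with this choice, and its weight $\frac{1}{nh(n)}$ is exactly the weight $\frac{1}{n(LLn)^b}$ appearing in the corollary. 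What remains is to replace the normalizing sequence $\sqrt{2nL\psi(n)}$ by $\sqrt{2nLLn}$.

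The heart of the matter is the asymptotic $L\psi(n)\sim LLn$, which I would establish by elementary two-sided bounds on $\psi(t)=\int_{1}^{t}(s(LLs)^b)^{-1}\dif s$. For the upper bound, $(LLs)^b\ge 1$ gives $\psi(t)\le\int_{1}^{t}s^{-1}\dif s=Lt$ for large $t$, so $L\psi(t)\le LLt$. For the lower bound, monotonicity of $(LLs)^b$ yields, for a fixed $t_0$ and all $t\ge t_0$, $\psi(t)\ge (LLt)^{-b}\int_{t_0}^{t}s^{-1}\dif s\sim Lt/(LLt)^b$, whence $L\psi(t)\ge LLt-b\ln(LLt)+o(1)$. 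Dividing through by $LLt$ and using $\ln(LLt)=o(LLt)$ shows $L\psi(t)/LLt\to 1$.

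Finally I would absorb the discrepancy into the constant. Fix $\varepsilon>0$ and pick $\varepsilon'\in(0,\varepsilon)$, say $\varepsilon'=\varepsilon/2$. Since $L\psi(n)/LLn\to 1<\big((1+\varepsilon)/(1+\varepsilon')\big)^2$, there is an $N$ with $(1+\varepsilon)\bar{\sigma}\sqrt{2nLLn}\ge(1+\varepsilon')\bar{\sigma}\sqrt{2nL\psi(n)}$ for all $n\ge N$, so by monotonicity of the capacity $\vv$,
$$
\vv\left(S_n\ge(1+\varepsilon)\bar{\sigma}\sqrt{2nLLn}\right)\le\vv\left(S_n\ge(1+\varepsilon')\bar{\sigma}\sqrt{2nL\psi(n)}\right),\quad n\ge N.
$$
Summing against $\frac{1}{n(LLn)^b}=\frac{1}{nh(n)}$ and invoking \eqref{02} with $\varepsilon'$ bounds the tail of the corollary's first series, while the finitely many terms with $n<N$ are trivially finite since $\vv\le 1$; the second assertion follows identically from \eqref{02+}. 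The only genuinely delicate step is the asymptotic $L\psi(n)\sim LLn$ together with the bookkeeping that lets a fixed loss in the normalizing constant be swallowed by shrinking $\varepsilon$ to $\varepsilon'$; everything else is routine verification of hypotheses and monotonicity.
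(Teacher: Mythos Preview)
Your argument is correct and coincides with the paper's approach, which merely invokes Theorem~\ref{thm01} together with the Li--Rosalsky device; specializing $h(t)=(LLt)^b$ and comparing the normalizing constants via $L\psi(n)\sim LLn$ is exactly that device, spelled out. Incidentally, only the upper bound $\psi(t)\le Lt$ (hence $L\psi(n)\le LLn$) is actually needed for the comparison, so your two-sided asymptotic is more than required but certainly not wrong.
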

	
	\begin{cor}\label{cor2}
		Assume that $0\le r<1$, $\{X,X_n, n\ge 1\}$ is a sequence of identically distributed NA random variables in sub-linear expectation space $(\Omega, \HH, \ee)$. If (\ref{01}) holds, then
		\begin{eqnarray*}
			&&	\sum_{n=1}^{\infty}\frac{1}{n(Ln)^r}\vv\left(|S_n|\ge (1+\varepsilon)\bar{\sigma}\sqrt{2(1-r)nLLn}
			\right)<\infty, \quad \forall \varepsilon>0,\\
			&&	\sum_{n=1}^{\infty}\frac{1}{n Ln}\vv\left(|S_n|\ge (1+\varepsilon)\bar{\sigma}\sqrt{2nLLLn}
			\right)<\infty, \quad \forall \varepsilon>0.
		\end{eqnarray*}
	\end{cor}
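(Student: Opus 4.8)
The plan is to deduce Corollary~\ref{cor2} directly from Theorem~\ref{thm01}: in each of the two cases one picks the auxiliary function $h$ so that the general normalising sequence $\sqrt{2nL\psi(n)}$ coincides, up to a factor $1+o(1)$, with the one prescribed in the statement, after which the slack between $1+\varepsilon/2$ and $1+\varepsilon$ absorbs that factor. The passage from one-sided to two-sided tails is free: since $\{|S_n|\ge t\}=\{S_n\ge t\}\cup\{-S_n\ge t\}$, sub-additivity of $\vv$ gives $\vv\{|S_n|\ge t\}\le\vv\{S_n\ge t\}+\vv\{-S_n\ge t\}$, so it is enough to control the series produced by \eqref{02} and, separately, the one produced by \eqref{02+}, and then add them.

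For the first estimate I would take $h(t)=(Lt)^{r}$. This function is positive and non-decreasing on $(0,\infty)$, and since $0\le r<1$ we have $\int_{1}^{\infty}(th(t))^{-1}\dif t=\int_{1}^{\me}\frac{\dif t}{t}+\int_{\me}^{\infty}\frac{\dif t}{t(\ln t)^{r}}=\infty$, so Theorem~\ref{thm01} applies. An elementary computation gives $\psi(t)=\int_{1}^{t}\frac{\dif s}{s(Ls)^{r}}=\frac{(\ln t)^{1-r}}{1-r}+O(1)$ for $t\ge\me$; hence $\psi(t)\to\infty$, and for large $t$, $L\psi(t)=\ln\psi(t)=(1-r)\,LLt+O(1)$, so that $\sqrt{2nL\psi(n)}=(1+o(1))\sqrt{2(1-r)nLLn}$. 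Now fix $\varepsilon>0$ and apply Theorem~\ref{thm01} with this $h$ and with $\varepsilon/2$ in place of $\varepsilon$: because $\frac{1+\varepsilon}{1+\varepsilon/2}>1$ while $\sqrt{2nL\psi(n)}/\sqrt{2(1-r)nLLn}\to1$, there is $n_{0}$ such that $(1+\varepsilon)\bar{\sigma}\sqrt{2(1-r)nLLn}\ge(1+\varepsilon/2)\bar{\sigma}\sqrt{2nL\psi(n)}$ for all $n\ge n_{0}$, and consequently $\vv\{S_n\ge(1+\varepsilon)\bar{\sigma}\sqrt{2(1-r)nLLn}\}\le\vv\{S_n\ge(1+\varepsilon/2)\bar{\sigma}\sqrt{2nL\psi(n)}\}$ for $n\ge n_{0}$. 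Summing this against $\frac1{nh(n)}=\frac1{n(Ln)^{r}}$ (the finitely many terms with $n<n_{0}$ being harmless), \eqref{02} yields $\sum_n\frac1{n(Ln)^{r}}\vv\{S_n\ge(1+\varepsilon)\bar{\sigma}\sqrt{2(1-r)nLLn}\}<\infty$; applying the same reasoning to \eqref{02+} handles $-S_n$, and adding the two series gives the first bound.

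For the second estimate the natural choice is $h(t)=Lt$, which is again positive, non-decreasing, and satisfies $\int_{1}^{\infty}(tLt)^{-1}\dif t=\infty$. Here $\psi(t)=\int_{1}^{t}\frac{\dif s}{s\,Ls}=1+\ln\ln t$ for $t\ge\me$, whence, for large $t$, $L\psi(t)=\ln(1+\ln\ln t)=LLLt+o(1)$ and therefore $\sqrt{2nL\psi(n)}=(1+o(1))\sqrt{2nLLLn}$. Repeating the absorption argument of the previous paragraph verbatim (use $\varepsilon/2$ in Theorem~\ref{thm01}, invoke $\frac{1+\varepsilon}{1+\varepsilon/2}>1$, sum against $\frac1{nh(n)}=\frac1{nLn}$, and combine the series for $S_n$ and for $-S_n$) gives $\sum_n\frac1{nLn}\vv\{|S_n|\ge(1+\varepsilon)\bar{\sigma}\sqrt{2nLLLn}\}<\infty$.

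Once Theorem~\ref{thm01} is available the derivation is routine, and the only place demanding care is the asymptotic evaluation of the iterated logarithms: one must track the truncation $\max\{\me,\cdot\}$ hidden in the operator $L$ and the $O(1)$ error incurred when passing to $\ln\psi(t)$. Since $LLn\to\infty$ (respectively $LLLn\to\infty$), these errors disappear into the factor $1+o(1)$ and, in the end, into the fixed gap between $1+\varepsilon/2$ and $1+\varepsilon$ — which is exactly why it suffices to invoke Theorem~\ref{thm01} with $\varepsilon/2$ instead of $\varepsilon$.
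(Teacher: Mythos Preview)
Your proposal is correct and matches the paper's intended derivation: the paper does not spell out a proof of Corollary~\ref{cor2} but simply presents it as a consequence of Theorem~\ref{thm01}, and your choices $h(t)=(Lt)^{r}$ and $h(t)=Lt$, together with the asymptotics $L\psi(n)=(1-r)LLn+O(1)$ and $L\psi(n)=LLLn+o(1)$, are exactly the standard specialisations that make this work. The $\varepsilon/2$ absorption of the $1+o(1)$ factor and the passage from one-sided to two-sided tails via sub-additivity of $\vv$ are the natural finishing steps.
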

	\section{Proof of major result}
	\emph{Proof of Theorem \ref{thm01}}
We only prove (\ref{01}) $\Rightarrow$ (\ref{02}), since (\ref{01}) $\Rightarrow$ (\ref{02+}) can be proved similarly. Without loss of generalities, we suppose that $\ee(X^2)=1$. Then for any $v>0$, write
\[
f(t)=(1+v)^2\left[1+\frac23\ln(1+t)\right]-(1+t), \quad t>-1.
\]
Then $f(t)$ is continuous on $(-1,\infty)$ and $f(0)>0$. Then we see that there exists $t_0(v)>0$ such that $f(t)>0$, $t\in [0,t_0(v)]$. For any $\varepsilon>0$, we choose $\varepsilon_i>0, i=1,2, 3$ such that $\sum_{i=1}^{3}\varepsilon_i=\varepsilon$. For $v=\varepsilon_1$, by the discussions as mentioned above, $\exists t_0(\varepsilon_1)>0$ such that $f(t_0(\varepsilon_1))>0$, and
\[
\frac{(1+\varepsilon_1)^2}{t_0(\varepsilon_1)+1}\left[1+\frac{2}{3}\ln(1+t_0(\varepsilon_1))\right]>1.
\]
Set
\[
\theta(\varepsilon_1)=-\frac{(1+\varepsilon_1)^2}{t_0(\varepsilon_1)+1}\left[1+\frac{2}{3}\ln(1+t_0(\varepsilon_1))\right].
\]
Then
\begin{eqnarray}\label{03}
	\theta(\varepsilon_1)<-1.
\end{eqnarray}
Write $\delta_n=\sqrt{n/[2L\psi(n)]}$, $n\ge 1$. For $1\le i \le n$, $n \ge 1$, denote
\begin{eqnarray*}
	X_{ni}^{(1)}=X_iI\left(X_i\le \frac{t_0(\varepsilon_1)}{1+\varepsilon_1}\delta_n\right) +\frac{t_0(\varepsilon_1)}{1+\varepsilon_1}\delta_n I\left(X_i>\frac{t_0(\varepsilon_1)}{1+\varepsilon_1}\delta_n\right),
\end{eqnarray*}
\begin{eqnarray*}
	X_{ni}^{(2)}=\left(X_i-\frac{t_0(\varepsilon_1)}{1+\varepsilon_1}\delta_n\right)I\left(\frac{t_0(\varepsilon_1)}{1+\varepsilon_1}\delta_n<X_i\le \sqrt{2n}\right)+\left(\sqrt{2n}-\frac{t_0(\varepsilon_1)}{1+\varepsilon_1}\delta_n\right)I\left(X_i>\sqrt{2n}\right),
\end{eqnarray*}
\begin{eqnarray*}
	X_{ni}^{(3)}=\left(X_i-\sqrt{2n}\right)I(X_i>\sqrt{2n}),
\end{eqnarray*}
and $X^{(k)}$ is defined similarly as $X_{ni}^{(k)}$ only with $X$ in place of $X_i$ above. Obviously $S_{nk}^{(j)}=\sum_{i=1}^{k}X_{ni}^{(j)}$, $j=1,2,3$, $1\le k\le n$, $n\ge 1$.  Then we see that
\begin{eqnarray*}
	&&\quad	\sum_{n=1}^{\infty}\frac{1}{nh(n)}\vv\left\{S_n\ge (1+\varepsilon)\sqrt{2nL\psi(n)}\right\}\\
	&&\le \sum_{n=1}^{\infty}\frac{1}{nh(n)}\vv\left\{S_{nn}^{(1)}\ge (1+\varepsilon_1)\sqrt{2nL\psi(n)}\right\}\\
	&&\quad +\sum_{j=2}^{3}\sum_{n=1}^{\infty}\frac{1}{nh(n)}\vv\left\{S_{nn}^{(j)}\ge \varepsilon_j\sqrt{2nL\psi(n)}\right\}\\
	&&=:\Rmnum{1}_1+\Rmnum{1}_2+\Rmnum{1}_3.
\end{eqnarray*} By the definition of $\psi(t)$, $0<\psi(t)\le h(1)^{-1}Lt$, $t>1$. We see that
\begin{eqnarray}\label{04}
	\lim_{n\rightarrow\infty}\delta_n=\infty.
\end{eqnarray}
Now we first prove $\Rmnum{1}_1<\infty$.
%Hence, to prove $\Rmnum{1}_1<\infty$, it is sufficient to establish
%\begin{eqnarray*}
%	\Rmnum{1}_{1}^*=\sum_{n=1}^{\infty}\frac{1}{nh(n)}\vv\left\{S_{nn}^{(1)}-\sum_{i=1}^{n}\ee X_{ni}^{(1)}\ge (1+\varepsilon_1/2)\sqrt{2nL\psi(n)}\right\}<\infty.
%\end{eqnarray*}
By the property of NA random variables, $\{X_{ni}^{(1)}, 1\le i\le n\}$ is also negatively dependent. For $\{X_{ni}^{(1)}, 1\le i\le n\}$, by Lemma \ref{lem01} and $\ee(X_{ni}^{(1)})\le \ee(X_i)= 0$, taking $x=(1+\varepsilon_1)\sqrt{2nL\psi(n)}$, $\alpha=t_0(\varepsilon_1)(1+\varepsilon_1)^{-1}\delta_n$, observing that $\max_{1\le i\le n}X_{ni}^{(1)}\le t_0(\varepsilon_1)(1+\varepsilon_1)^{-1}\delta_n=\alpha$, $\sum_{i=1}^{n}\ee\left[(X_{ni}^{(1)})^2\right]\le n$, we see that
\begin{eqnarray*}
	&&	\vv\left\{S_{nn}^{(1)}\ge (1+\varepsilon_1)\sqrt{2nL\psi(n)}\right\}\\
	&&\quad\le \exp\left\{-\frac{(1+\varepsilon_1)^2L\psi(n)}{t_0(\varepsilon_1)+1}\left[1+\frac23\ln(1+t_0(\varepsilon_1))\right]\right\}\\
	&&\quad \le \exp\left\{\theta(\varepsilon_1)L\psi(n)\right\}=\psi(n)^{\theta(\varepsilon_1)}.
\end{eqnarray*}
Hence, by (\ref{03}), we conclude that
\begin{eqnarray*}
	\Rmnum{1}_1\le C\sum_{n=1}^{\infty}\frac{1}{nh(n)}\psi(n)^{\theta(\varepsilon_1)}\le C\int_{1}^{\infty}\frac{1}{xh(x)\psi(x)^{-\theta(\varepsilon_1)}}\dif x\le C\int_{\psi(1)}^{\infty}t^{\theta(\varepsilon_1)}\dif t<\infty.
\end{eqnarray*}

Next we prove $\Rmnum{1}_2<\infty$. By $\ee(X^2)=1$, the definition of $X_{nk}^{(2)}$ and (\ref{04}), we obtain
\begin{eqnarray*}
	0&\le&\frac{1}{\sqrt{2nL\psi(n)}}\sum_{k=1}^{n}\ee X_{nk}^{(2)}=\frac{n}{\sqrt{2nL\psi(n)}}\ee X^{(2)}\\
	&\le&\frac{n}{\sqrt{2nL\psi(n)}}C_{\vv}\left\{|X|I\left(|X|>\frac{t_0(\varepsilon_1)}{1+\varepsilon_1}\delta_n\right)\right\}\\
	 &\le&C\left[\left(\frac{t_0(\varepsilon_1)}{1+\varepsilon_1}\delta_n\right)^2\vv\left\{|X|>\frac{t_0(\varepsilon_1)}{1+\varepsilon_1}\delta_n\right\}+\int_{\frac{t_0(\varepsilon_1)}{1+\varepsilon_1}\delta_n}^{\infty}y\vv\left\{|X|>y\right\}\dif y\right]\\
	&\rightarrow&0 \text{   as $n\rightarrow\infty$.}
\end{eqnarray*}
By the definition of $X_{nk}^{(2)}$ and NA random variables, we know that $\{X_{(ni)}^{(2)},1\le i\le n\}$ is negatively dependent under sub-linear expectations. By Lemma \ref{lem02}, Markov's inequality under sub-linear expectations, $C_r$ inequality, Jensen's inequality under sub-linear expectations (cf. Lin \cite{lin2019jensen}), Lemma 2.3 of Xu \cite{xu2024onthe},  when $1<\tau<2$, we have
\begin{eqnarray*}
	\Rmnum{1}_{2}&\le&C\sum_{n=1}^{\infty}\frac{1}{nh(n)}\vv\left\{\sum_{i=1}^{n}X_{ni}^{(2)}\ge \varepsilon_2\sqrt{2nL\psi(n)}\right\}\\
	&\le&C+C\sum_{n=1}^{\infty}\frac{1}{nh(n)}\vv\left\{\sum_{i=1}^{n}\left(X_{ni}^{(2)}-\ee X_{ni}^{(2)}\right)\ge \varepsilon_2/2\sqrt{2nL\psi(n)}\right\}\\
	&\le&C+C\sum_{n=1}^{\infty}\frac{1}{nh(n)}\frac{1}{(\sqrt{2nL\psi(n)})^{\tau}}\ee\left(\left(\sum_{i=1}^{n}\left[X_{ni}^{(2)}-\ee X_{ni}^{(2)}\right]\right)^{+}\right)^{\tau}\\
	&\le&C+C\sum_{n=1}^{\infty}\frac{1}{nh(n)}\frac{1}{(\sqrt{2nL\psi(n)})^{\tau}}\sum_{i=1}^{n}\ee|X_{ni}^{(2)}|^{\tau}\\
	&\le&C+C\sum_{n=1}^{\infty}\frac{1}{nh(n)}\frac{1}{(\sqrt{2nL\psi(n)})^{\tau}}n\ee|X^{(2)}|^{\tau}\\
	&\le&C+C\sum_{n=1}^{\infty}\frac{1}{h(n)}\frac{1}{(\sqrt{2nL\psi(n)})^{\tau}}C_{\vv}\left\{|X^{(2)}|^{\tau}\right\}\\
	 &\le&C+C\sum_{n=1}^{\infty}\frac{1}{h(n)}\frac{1}{(\sqrt{2nL\psi(n)})^{\tau}\delta_n^{\tau}}C_{\vv}\left\{X^{2\tau}I\left\{\frac{t_0(\varepsilon_1)}{1+\varepsilon_1}\delta_n <X\le \sqrt{2n}\right\}+(2n)^{\tau}I\{X>\sqrt{2n}\}\right\}\\
	&\le&C+C\sum_{n=1}^{\infty}\frac{1}{n^{\tau}}C_{\vv}\left\{|X|^{2\tau}I\left\{|X|\le \sqrt{2n}\right\}\right\}+C\sum_{n=1}^{\infty}\vv\left\{X>\sqrt{2n}\right\}\\
	&\le&C+C\sum_{n=1}^{\infty}\frac{1}{n^{\tau}}\int_{0}^{\sqrt{2n}}\vv\left\{|X|>y\right\}y^{2\tau-1}\dif y+CC_{\vv}\left\{|X|^2\right\}\\
	&\le&C+C\int_{1}^{\infty}\frac{1}{x^{\tau}}\dif x\int_{0}^{\sqrt{2x}}\vv\left\{|X|>y\right\}y^{2\tau-1}\dif y+CC_{\vv}\left\{|X|^2\right\}\\
	&\le&C+\int_{0}^{\infty}\vv\left\{|X|>y\right\}y^{2\tau-1}\dif y\int_{\frac{y^2}{2}\bigvee 1}^{\infty}\frac{1}{x^{\tau}}\dif x+CC_{\vv}\left\{|X|^2\right\}\\
	&\le&C+C\int_{0}^{\infty}\vv\left\{|X|>y\right\}\left(\frac{y^2}{2}\bigvee 1\right)^{-\tau+1}y^{2\tau-1}\dif y+CC_{\vv}\left\{|X|^2\right\}\\
	&\le&C+CC_{\vv}\left\{|X|^2\right\}<\infty.
\end{eqnarray*}
Finally, we prove $\Rmnum{1}_3<\infty$. By the discussion above (2.8) of Zhang \cite{zhang2021onthelaw}, we see that
\begin{eqnarray*}
	\Rmnum{1}_3&\le&C\sum_{n=1}^{\infty}\frac{1}{nh(n)}\vv\left\{\bigcup_{i=1}^n\left\{|X_i|>\sqrt{2n}\right\}\right\}\\
	&\le&C\sum_{n=1}^{\infty}\vv\left\{|X|>\sqrt{2n}\right\}\le CC_{\vv}\left\{|X|^2\right\}<\infty.
\end{eqnarray*}
This completes the proof. ~$\Box$

%{\bf Acknowledgements}

%{\bf Funding}

%{\bf Availability of data and materials}

%No data were used to support this study.

%{\bf Competing interests}

%The authors declare that they have no competing interests.

%{\bf Authors’ contributions}

%All authors contributed equally and read and approved the final manuscript.

%\quad{\bf 学科分类号:}\quad O211.4.
\end{CJK*}
\end{document}